\title[Binomial Character Sums Modulo Prime Powers]{Binomial
 Character Sums Modulo Prime Powers}
\author{Vincent Pigno}
\address{ Department of Mathematics\\
          Kansas State University\\
          Manhattan, KS 66506}
\email{pignov@math.ksu.edu}
\author{Christopher Pinner}
\address{ Department of Mathematics\\
         Kansas State University\\ and
         Manhattan, KS 66506}
\email{pinner@math.ksu.edu}
\keywords{Character Sums, Gauss sums, Jacobi Sums}
\subjclass[2010]{Primary:11L10, 11L40; Secondary:11L03,11L05}
\date{\today}
\newcommand{\be}{\begin{equation}}
\newcommand{\ee}{\end{equation}}
\newcommand{\ba}{\begin{align}}
\newcommand{\ea}{\end{align}}
\newcommand{\sumstar}[2]{\sideset{}{^*}\sum_{#1}^{#2}}
\begin{document}
\newenvironment{poliabstract}[1]
  {\renewcommand{\abstractname}{#1}\begin{abstract}}
  {\end{abstract}}


%
%



\maketitle
\selectlanguage{french}
\begin{abstract}
On montre que les sommes binomiales et li\'{e}es de caract\`{e}res multiplicatifs
$$ \sum_{\stackrel{x=1}{(x,p)=1}}^{p^m} \chi (x^l(Ax^k +B)^w),\hspace{3ex}
\sum_{x=1}^{p^m} \chi_1 (x)\chi_2(Ax^k +B), $$
ont une \'{e}valuation simple pour $m$ suffisamment grand (pour $m\geq2$ si $p\nmid ABk$).
\end{abstract}

\selectlanguage{english}
\begin{abstract}
 We show that the  binomial and related 
 multiplicative character sums
 $$ \sum_{\stackrel{x=1}{(x,p)=1}}^{p^m} \chi (x^l(Ax^k +B)^w),\hspace{3ex}
 \sum_{x=1}^{p^m} \chi_1 (x)\chi_2(Ax^k +B), $$
 have a simple evaluation for large enough $m$ (for $m\geq 2$ if
 $p\nmid ABk$).
\end{abstract}

\newtheorem{theorem}{Theorem}[section]
\newtheorem{corollary}{Corollary}[section]
\newtheorem{lemma}{Lemma}[section]
\newtheorem{conjecture}{Conjecture}[section]

\section{Introduction}
For a multiplicative character $\chi$  mod $p^m$, and rational
functions $f(x), g(x)\in \mathbb{Z}(x)$  one can define the mixed
exponential sum, \be \label{generaldef}
S(\chi,g(x),f(x),p^m):=\sumstar{x=1}{p^m} \chi(g(x))e_{p^m}(f(x))
\ee where $e_{y}(x)=e^{2\pi i x/y}$ and $*$ indicates that we omit
any $x$ producing a non-invertible denominator in $f$ or $g$. When
$m=1$ such sums have Weil \cite{weil} type bounds; for example if
$f$ is a polynomial and the sum is non-degenerate then \be
\label{weil}\left| S(\chi,g(x),f(x),p) \right| \leq (\deg (f) +\ell
-1) \, p^{1/2}, \ee where $\ell$ denotes the number of zeros  and
poles of $g$ (see Castro \& Moreno \cite{castro} or  Cochrane \&
Pinner \cite{cpstep} for a treatment of the  general case).

When  $m\geq 2$ methods of Cochrane and Zheng \cite{esopp} (see also
\cite{cz} \& \cite{cz2}) can be used to reduce and simplify the
sums. For example we showed in \cite{PP} that the sums \be
\label{oldsums} \sum_{x=1}^{p^m} \chi(x)e_{p^m}(nx^k) \ee can be
evaluated explicitly when $m$ is sufficently large (for $m\geq 2$ if
$p\nmid nk$). We show here that the multiplicative character sums,
\be \label{newsums}
S^*(\chi,x^l(Ax^k+B)^w,p^m)=\sum_{\stackrel{x=1}{p\nmid x}}^{p^m}
\chi(x^l(Ax^k+B)^w) \ee similarly have a simple evaluation for large
enough $m$ (for $m\geq 2$ if  $p\nmid ABk$). Equivalently, for
characters $\chi_1$ and $\chi_2$ mod $p^m$ we define \be
\label{defSS} S(\chi_1,\chi_2,Ax^k+B,p^m)= \sum_{x=1}^{p^m}\chi_1
(x)\chi_2 (Ax^k+B). \ee These include the mod $p^m$ generalizations
of the classical Jacobi sums \be \label{Jacobi} J(\chi_1,\chi_2,p^m)
= \sum_{x=1}^{p^m} \chi_1(x)\chi_2(1-x). \ee These sums have been
evaluated exactly by Zhang Wenpeng \& Weili Yao  \cite{Wenpeng1}
when $\chi_1$, $\chi_2$ and $\chi_1\chi_2$ are primitive and $m\geq
2$ is even (some generalizations are considered in \cite{Wenpeng2}).

Writing \be \label{equiv} \chi_1=\chi^l,\;\; \chi_2=\chi^w,\;\;\;
\chi_1(x)\chi_2(Ax^k+B)=\chi(x^l(Ax^k+B)^w),\ee with $\chi_1=\chi_0$
the principal character if $l=0$, the correspondence between
\eqref{newsums} and \eqref{defSS} is clear. Of course the
restriction $p\nmid x$  in \eqref{newsums} only differs from
$\sum^*$ when $l= 0$. We shall assume throughout that $\chi_2$ is a
primitive character mod $p^m$ (equivalently $\chi$ is primitive and
$p\nmid w$); if $\chi_2$ is not primitive but $\chi_1$ is primitive
then $S(\chi_1,\chi_2,Ax^k+B,p^m)=0$ (since $\sum_{y=1}^p
\chi_1(x+yp^{m-1})=0$), if both are not primitive we can reduce to a
lower modulus
$S(\chi_1,\chi_2,Ax^k+B,p^m)=pS(\chi_1,\chi_2,Ax^k+B,p^{m-1}).$

It is interesting that the sums  \eqref{oldsums} and \eqref{newsums}  can both be written explicitly in terms
of classical Gauss sums for any $m\geq 1$. In particular one can
trivially recover the Weil bound in these cases. We explore this in
Section 2.

We assume, noting the correspondence \eqref{equiv} between
\eqref{newsums} and  \eqref{defSS}, that \be \label{defg}
g(x)=x^l(Ax^k+B)^w, \;\;\;p\nmid w \ee where $k,l$ are integers with
$k> 0$ (else $x\mapsto x^{-1}$) and
 $A$, $B$
non-zero integers  with \be \label{defn} A=p^nA', \;\; 0\leq n <m,
\;\;p\nmid A'B.\;\; \ee We define the integers $d\geq 1$ and $t\geq
0$  by \be \label{defdt} d=(k,p-1),\hspace{4ex} p^t \mid \mid k. \ee
For $m\geq n+t+1$ it transpires that the sum in \eqref{newsums} or
\eqref{defSS} is zero unless \be \label{conditions}
\chi_1=\chi_3^{k},\ee for some mod $p^m$ character, $\chi_3$ (i.e.\
$\chi$ is the $(k,\phi(p^m))/(k,l,\phi (p^m))$th power of a
character), and we have a solution, $x_0$, to a characteristic
equation of the form, \be \label{chareq0} g'(x) \equiv 0 \text{ mod
} p^{\text{min} \left\{ m-1,\,[\frac{m+n}{2}]+t\right\}} \ee with
\be \label{conditions2} p\nmid x_0(Ax_0^k+B).\ee
 Notice that in order to
have a solution to \eqref{chareq0} we must have \be
\label{conditions1} p^{n+t}\mid\mid l, \;\;\; p^{t}\mid\mid l+wk,
\ee if $m>t+n+1$ (equivalently $\chi_1$ is induced by a primitive
mod $p^{m-n-t}$ character and $\chi_1\chi_2^w$ is a primitive mod
$p^{m-t}$ character) and $p^{n+t}\mid l$ if $m=t+n+1$.

When \eqref{conditions} holds, \eqref{chareq0} has a solution $x_0$
satisfying \eqref{conditions2} and $m>n+t+1$, Theorem \ref{main}
below gives an explicit evaluation of the sum \eqref{defSS}. From this we
see that \be \label{abs} \left| \sum_{x=1}^{p^m}
\chi_1(x)\chi_2(Ax^k+B) \right| =
\begin{cases} dp^{m-1}, & \text{ if }
t+n+1 <m \leq 2t+n+2, \\
dp^{\frac{m+n}{2}+t}, & \text{ if } 2t+n+2 < m .\end{cases} \ee The
condition $m> t+n+1$ is natural here; if $t\geq m-n$ then one can of
course use Euler's Theorem to reduce the power of $p$ in $k$ to
$t=m-n-1$. If $t=m-n-1$ and the sum is non-zero then, as in a
Heilbronn sum, we obtain a mod $p$ sum, $p^{m-1}\sum_{x=1}^{p-1}
\chi (x^l(Ax^k+B)^w)$, where one does not expect a nice evaluation.
For $t=0$ the result \eqref{abs} can be obtained from \cite{esopp}
by showing equality in their $S_{\alpha}$ evaluated at the $d$
critical points $\alpha$. For $t>0$ the $\alpha$ will not have
multiplicity one as needed in \cite{esopp}.

Condition \eqref{conditions} will arise naturally in our proof of Theorem \ref{main} but can also be seen from elementary considerations.

\begin{lemma}\label{ChiPower}
For any odd prime $p$, multiplicative characters $\chi_1 $, $\chi_2$
mod $p^m$, and $f_1$, $f_2$ in $\mathbb Z [x]$, the sum
$\displaystyle S=\sum_{x=1}^{p^m} \chi_1(x)
\chi_2(f_1(x^k))e_{p^m}(f_2(x^k)) $ is zero unless $\chi_1
=\chi_3^k$ for some mod $p^m$ character $\chi_3$.
\end{lemma}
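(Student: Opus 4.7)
The plan is to exploit a change of variable $x \mapsto ax$ where $a$ runs over the $k$-th roots of unity modulo $p^m$. Since $(ax)^k \equiv x^k \pmod{p^m}$ whenever $a^k \equiv 1 \pmod{p^m}$, the factors $\chi_2(f_1(x^k))$ and $e_{p^m}(f_2(x^k))$ are unchanged, while $\chi_1(ax) = \chi_1(a)\chi_1(x)$. The substitution therefore yields the identity
\begin{equation*}
S = \chi_1(a)\, S \qquad \text{for every } a \text{ with } a^k \equiv 1 \pmod{p^m}.
\end{equation*}
Hence if there exists a single such $a$ with $\chi_1(a) \ne 1$, we immediately conclude $S = 0$.

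It remains to show that the converse of the $k$-th power condition holds: if $\chi_1$ is \emph{not} of the form $\chi_3^k$, then there exists a $k$-th root of unity $a$ modulo $p^m$ with $\chi_1(a) \ne 1$. Since $p$ is odd, $(\mathbb Z/p^m\mathbb Z)^*$ is cyclic; pick a generator $\gamma$ and set $\phi = \phi(p^m)$. Writing $\chi_1(\gamma) = e_{\phi}(s)$ for some integer $s$, the character $\chi_1$ is a $k$-th power exactly when $kr \equiv s \pmod{\phi}$ is solvable, i.e.\ when $(k,\phi) \mid s$. On the other hand, the $k$-th roots of unity mod $p^m$ form the cyclic subgroup generated by $a_0 = \gamma^{\phi/(k,\phi)}$, and $\chi_1$ is trivial on this subgroup iff $\chi_1(a_0) = e_{\phi}(s\phi/(k,\phi)) = 1$, which again amounts to $(k,\phi) \mid s$. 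The two conditions coincide, completing the argument.

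There is no real obstacle here; the only thing to be careful about is the equivalence between \textquotedblleft $\chi_1$ is a $k$-th power character\textquotedblright\ and \textquotedblleft $\chi_1$ is trivial on all $k$-th roots of unity mod $p^m$,\textquotedblright\ which follows cleanly from the cyclicity of $(\mathbb Z/p^m\mathbb Z)^*$ for odd $p$. (In particular the argument does not extend verbatim to $p=2$ for $m\geq 3$, where the group is no longer cyclic, which is consistent with the hypothesis of the lemma.)
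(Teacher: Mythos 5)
Your argument is correct and is essentially the paper's own proof: the authors also substitute $x\mapsto zx$ with $z=a^{\phi(p^m)/(k,\phi(p^m))}$ (your $a_0$, the generator of the $k$-th roots of unity mod $p^m$), deduce $S=\chi_1(z)S$, and convert $\chi_1(z)=1$ into the solvability of $c_1k\equiv c'(k,\phi(p^m)) \bmod \phi(p^m)$, which is exactly your equivalence $(k,\phi)\mid s$. The only cosmetic difference is that you phrase it contrapositively and via triviality on the whole subgroup of $k$-th roots of unity before reducing to its generator.
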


\begin{proof}
Taking $z = a^{\phi (p^m)/(k,\phi (p^m))}$, $a$ a primitive root mod
$p^m$, we have $z^{k}=1$ and
$$ S =\sum_{x=1}^{p^m} \chi_1(xz) \chi_2(f_1((xz)^k))e_{p^m}(f_2((xz)^k))= \chi_1 (z) S. $$ Hence if
$S\neq 0$ we must have $1=\chi_1(z)=\chi_1 (a)^{\phi (p^m)/(k,\phi
(p^m))}$ and $ \chi_1 (a) = e_{\phi(p^m)}\left(c'
(k,\phi(p^m))\right)$ for some integer $c'$. For an integer $c_1$
satisfying
$$c'(k,\phi(p^m))\equiv c_1 k \text{ mod }\phi(p^m), $$
we equivalently have $\chi_1=\chi_3^{k}$ where $\chi_3(a)=e_{\phi
(p^m)}(c_1)$.

\end{proof}
Finally we observe that if $\chi$ is a mod $rs$ character with
$(r,s)=1$, then $\chi=\chi_1\chi_2$ for a mod $r$ character $\chi_1$
and mod $s$ character $\chi_2$, and for any $g(x)$ in $\mathbb Z
[x]$
$$ \sum_{x=1}^{rs}\chi (g(x))=\sum_{x=1}^{r}\chi_1 (g(x))\sum_{x=1}^{s}\chi_2
(g(x)). $$ Thus it is enough to work modulo prime powers.

\section{Gauss Sums and Weil type bounds}
\noindent For a character $\chi$ mod $p^j$, $j\geq 1$, we let
$G(\chi,p^j)$ denote the classical Gauss sum
$$ G(\chi,p^j)=\sum_{x=1}^{p^j}\chi(x)e_{p^j}(x).  $$
Recall (see  for example Section 1.6 of Berndt, Evans \& Williams
 \cite{BerndtBk}) that \be \label{gaussabs}
\left|G(\chi,p^j)\right|=
\begin{cases} p^{j/2}, & \text{ if
$\chi$ is primitive mod $p^j$,} \\
 1, & \text{ if $\chi=\chi_0$ and $j=1$,} \\
  0, & \text{ otherwise. }
  \end{cases}\ee
It is well known that the mod $p$ Jacobi sums \eqref{Jacobi} (and
their generalization to finite fields) can be written in terms of
Gauss sums (see for example Theorem 2.1.3 of \cite{BerndtBk} or
Theorem 5.21 of \cite{LidlNiedBk}). This extends to the mod $p^m$
sums. For example when $\chi_1$, $\chi_2$ and $\chi_1\chi_2$ are
primitive mod $p^m$
\be \label{JacobiGauss} J(\chi_1,\chi_2,p^m)
=\frac{G(\chi_1,p^m)G(\chi_2,p^m)}{G(\chi_1\chi_2,p^m) },\ee and
$\left|J(\chi_1,\chi_2,p^m)\right|=p^{m/2}$ (see Lemma 1 of
\cite{Wenpeng3} or \cite{Wenpeng2}; the relationship for Jacobi sums
over more general residue rings modulo prime powers can be found in
\cite{JacobiSums}).

We showed in \cite{PP} that for $p\nmid n$ the sums
$$
S(\chi,x,nx^k,p^m)=\sum_{x=1}^{p^m} \chi(x) e_{p^m}(nx^k)
$$
are zero unless $\chi=\chi_1^k$ for some character $\chi_1$ mod
$p^m$, in which case (summing over the characters whose order
divides $(k,\phi(p^m))$ to pick out the $k$th powers)
$$ S(\chi,x,nx^k,p^m) = \sum_{\chi_2^{(k,\phi(p^m))}=\chi_0}
\overline{\chi_1 \chi_2}(n)G(\chi_1 \chi_2, p^m). $$
From this one immediately
obtains a Weil type bound $$ \left|S(\chi,x,nx^k,p^m)\right| \leq
(k,\phi (p^m)) p^{m/2}.
$$

From Lemma \ref{ChiPower} we know that the sum in  \eqref{defSS} is
zero unless $\chi_1=\chi_3^{k}$ for some character $\chi_3$ mod
$p^m$, in which case the sum can be written as  $(k,\phi (p^m))$ mod
$p^m$  Jacobi like sums $\sum_{x=1}^{p^m} \chi_5(x)\chi_2(Ax+B)$ and
again be expressed in terms of Gauss sums.

\begin{theorem}\label{GaussSum} Let $p$ be an odd prime. If $\chi_1$, $\chi_2$ are characters mod $p^m$
with $\chi_2$ primitive and  $\chi_1=\chi_3^{k}$ for some character
$\chi_3$ mod $p^m$, and $n$ and $A'$ are as defined in \eqref{defn},
then
$$ \sum_{x=1}^{p^m} \chi_1(x)\chi_2(Ax^k+B)= p^n \sum_{\chi_4\in X}
\overline{\chi_3\chi_4}(A')\chi_2 \chi_3 \chi_4 (B)\frac{
G(\chi_3\chi_4,p^{m-n})G(\overline{\chi_2 \chi_3
\chi_4},p^m)}{G(\overline{\chi_2},p^m)},$$ where $X$ denotes the mod
$p^m$ characters $\chi_4$ with $\chi_4^{D}=\chi_0$,  $D=(k,\phi
(p^m))$, such that $\chi_3\chi_4$ is a mod $p^{m-n}$ character.

\end{theorem}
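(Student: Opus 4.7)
The strategy is to reduce the sum over $x^k$ to a family of ``Jacobi-like'' linear sums $\sum_y \chi_5(y)\chi_2(Ay+B)$ and then evaluate each linear sum using the Gauss-sum expansion of $\chi_2$. Since $\chi_1=\chi_3^k$ implies $\chi_1(x)=\chi_3(x^k)$, the substitution $y=x^k$ (a $D$-to-$1$ map from $(\mathbb Z/p^m)^*$ onto the subgroup $H$ of $k$-th powers, with $D=(k,\phi(p^m))$) together with the orthogonality identity $\mathbf{1}_{y\in H}=\frac{1}{D}\sum_{\chi_4^D=\chi_0}\chi_4(y)$ yields
$$\sum_{x=1}^{p^m}\chi_1(x)\chi_2(Ax^k+B)=\sum_{\chi_4:\,\chi_4^D=\chi_0}\sum_{y=1}^{p^m}\chi_3\chi_4(y)\chi_2(Ay+B).$$

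For each inner sum I would apply the Fourier identity $\chi_2(n)=G(\overline{\chi_2},p^m)^{-1}\sum_t \overline{\chi_2}(t)e_{p^m}(nt)$, valid for every $n$ by primitivity of $\chi_2$. Writing $\chi_5=\chi_3\chi_4$ and interchanging summations,
$$\sum_y \chi_5(y)\chi_2(Ay+B)=G(\overline{\chi_2},p^m)^{-1}\sum_t \overline{\chi_2}(t)e_{p^m}(tB)\sum_y \chi_5(y)e_{p^m}(tp^n A' y).$$
The crux is the innermost sum. Writing $y=y_0+p^{m-n}y_1$ with $y_0\in\{1,\dots,p^{m-n}\}$ and $y_1\in\{0,\dots,p^n-1\}$, the phase depends only on $y_0$, while the $y_1$-summation reduces (after the bijection $y_1\mapsto y_1 y_0^{-1}$) to $\sum_{y_1'=0}^{p^n-1}\chi_5(1+p^{m-n}y_1')$. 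By orthogonality on the subgroup $1+p^{m-n}\mathbb Z/p^m\mathbb Z$, this equals $p^n$ exactly when $\chi_3\chi_4$ is trivial on that subgroup, i.e.\ factors through a mod $p^{m-n}$ character---which is the defining condition of $\chi_4\in X$---and vanishes otherwise. For $\chi_4\in X$ and $(t,p)=1$, the remaining $y_0$-sum is a Gauss-type sum mod $p^{m-n}$ evaluating to $p^n\overline{\chi_5}(tA')G(\chi_5,p^{m-n})$ (and for $p\mid t$ the sum vanishes).

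Substituting back, the $t$-sum becomes
$$p^n \overline{\chi_5}(A')G(\chi_5,p^{m-n})\,G(\overline{\chi_2},p^m)^{-1}\sum_t \overline{\chi_2\chi_5}(t)e_{p^m}(tB),$$
and since $(B,p)=1$ the substitution $t\mapsto tB^{-1}$ evaluates the remaining sum as $\chi_2\chi_5(B)G(\overline{\chi_2\chi_5},p^m)$. Assembling the pieces and summing over $\chi_4\in X$ delivers the claimed identity (terms with imprimitive $\chi_3\chi_4$ or imprimitive $\chi_2\chi_3\chi_4$ contribute zero automatically, since the corresponding Gauss sum vanishes). The chief obstacle is the middle step: pinpointing the exact cutoff on $\chi_3\chi_4$ that makes the inner exponential sum nonvanishing, and correctly accounting for the reduction of Gauss-sum modulus from $p^m$ to $p^{m-n}$ together with the compensating factor $p^n$.
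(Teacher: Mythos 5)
Your proposal is correct and follows essentially the same route as the paper: orthogonality over the characters of order dividing $D=(k,\phi(p^m))$ to reduce to the linear sums $\sum_y\chi_3\chi_4(y)\chi_2(Ay+B)$, Gauss-sum inversion of $\chi_2$, and separate evaluation of the two resulting twisted linear exponential sums, with the non-$X$ terms killed exactly as you describe. The only cosmetic difference is that you detect the vanishing for $\chi_3\chi_4$ not defined mod $p^{m-n}$ by splitting $y=y_0+p^{m-n}y_1$ and using orthogonality on $1+p^{m-n}\mathbb{Z}/p^m\mathbb{Z}$, whereas the paper gets the same dichotomy from its primitive-root computation \eqref{gaussprop}.
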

We immediately obtain the Weil type bound \be \label{weil2}
\left|S(\chi_1,\chi_2, Ax^k+B,p^m)\right| \leq (k,\phi(p^m))
p^{(m+n)/2}. \ee For $m=1$ and $p\nmid A$ this gives us the bound
$$ \left| \sum_{x=1}^{p-1}\chi\left(x^l(Ax^k+B)^w\right) \right| \leq dp^{\frac{1}{2}}, $$
where $d=(k,p-1)$. For $l=0$ we can slightly improve this for the
complete sum,
$$\left| \sum_{x=0}^{p-1} \chi (Ax^k+B)\right| \leq (d-1)p^{\frac{1}{2}}, $$
since, taking $\chi_1=\chi_3=\chi_0$, $\chi_2=\chi$, the
$\chi_4=\chi_0$ term in Theorem \ref{GaussSum} equals $-\chi(B)$,
the missing $x=0$ term  in \eqref{newsums}. These correspond to the
classical Weil bound \eqref{weil} after an appropriate change of
variables to replace $k$ by $d$. For $m\geq t+1$ the bound
\eqref{weil2} is $dp^{\frac{m+n}{2}+t}$, so by \eqref{abs} we have
equality in \eqref{weil2} for $m\geq n+2t+2$, but not for $t+n+1
<m<2t+n+2$.

Notice that if $(k,\phi(p^m))=1$, as in the generalized Jacobi sums
\eqref{Jacobi}, with $\chi_2$ primitive, and $\chi_1=\chi_3^k$  is a mod $p^{m-n}$ character
if $p\mid A$, then we have the single $\chi_4=\chi_0$ term and
$$ \sum_{x=1}^{p^m} \chi_1 (x) \chi_2(Ax^k+B) = p^n
\overline{\chi}_{3}(A') \chi_2\chi_3(B)
\frac{G(\chi_3,p^{m-n})G(\overline{\chi_2\chi_3},p^{m})}{G(\overline{\chi_2},p^m)},
$$
of absolute value $p^{(m+n)/2}$ if $\chi_2,\chi_2\chi_3$ and $\chi_3$ are primitive mod $p^m$
and $p^{m-n}$ (noting that 
$\overline{G(\overline{\chi},p^m)}=\chi(-1)G(\chi,p^m)$ 
we plainly recover the form  \eqref{JacobiGauss} in that case).

For the multiplicative analogue of the classical Kloostermann sums,
$\chi$ assumed primitive and $p\nmid A$,
 Theorem
\ref{GaussSum} gives a sum of two terms of size $p^{m/2}$
$$ \sumstar{x=1}{p^m}\chi (Ax+x^{-1}) = \frac{\overline{\chi}_3(A)}{G(\overline{\chi},p^m)}
\left( G(\chi_3,p^m)^2 + \chi^*(A)
G\left(\chi_3\chi^*,p^m\right)^2\right)
$$
when  $\chi=\overline{\chi}_3^2$ (otherwise the sum is zero), where
$\chi^*$ denotes the mod $p^m$ extension of the Legendre symbol
(taking $\chi_2=\chi$, $\chi_1=\overline{\chi}$, $k=2$ we have $D=2$
and $\chi_4=\chi_0$ or $\chi^*$). For $m=1$ this is Han Di's
\cite[Lemma 1]{HanDi}. Cases where we can write the exponential sum
explicitly in terms of Gauss sums seem rare. Best known (after the
quadratic Gauss sums) are perhaps the Sali\'{e} sums, evaluated by
Sali\'{e} \cite{Salie} for $m=1$ (see Williams
\cite{salie1},\cite{salie2} or Mordell \cite{mordell} for a short
proof) and Cochrane \& Zheng \cite[\S 5]{esrf} for $m\geq 2$; for
$p\nmid AB$
$$\sumstar{x=1}{p^m}
\chi^*(x) e_{p^m}(Ax+Bx^{-1})=\chi^*(B)\begin{cases}
p^{\frac{1}{2}(m-1)}(e_{p^m}(2\gamma)
+e_{p^m}(-2\gamma))G\left(\chi^*,p\right), & \text{$m$ odd,} \\
p^{\frac{1}{2}m}\left(\chi^*(\gamma)e_{p^m}(2\gamma)
+\chi^*(-\gamma) e_{p^m}(-2\gamma)\right), & \text{$m$ even,}
\end{cases} $$ if $AB=\gamma^2$ mod $p^m$, and zero if
$\chi^*(AB)=-1$. Cochrane \& Zheng's $m\geq 2$ method works with a
general $\chi$  as long as their critical point quadratic congruence
does not have a repeat root, but formulae seem lacking when  $m=1$
and $\chi\neq \chi^*$.

For the Jacobsthal sums we get (essentially Theorems 6.1.14 \&
6.1.15 of \cite{BerndtBk})
\begin{align*}  \sum_{m=1}^{p-1}
\left(\frac{m}{p}\right)\left( \frac{m^k+B}{p}\right)
 & =\left(\frac{B}{p}\right)\sum_{j=0}^{k-1}
\chi(B)^{2j+1}\frac{G(\chi^{2j+1},p)G(\overline{\chi}^{2j+1}\chi^*,p)}{G(\chi^*,p)},\\
\sum_{m=0}^{p-1} \left( \frac{m^k+B}{p}\right) & = \left(\frac{B}{p}\right)\sum_{j=1}^{k-1} \chi(B)^{2j}\frac{G(\chi^{2j},p)G(\overline{\chi}^{2j}\chi^*,p)}{G(\chi^*,p)},
\end{align*}
when $p\equiv 1$ mod $2k$ and $p\nmid B$, where $\chi $ denotes a
mod $p$ character of order
 $2k$ and $\chi^*$ the mod $p$ character corresponding to the Legendre symbol (see also \cite{WilliamsJac}).

\begin{proof} Observe that if $\chi$ is a primitive character mod
$p^j$, $j\geq 1$,  then \begin{equation} \label{gauss1}
\sum_{y=1}^{p^j} \chi(y) e_{p^j}(Ay) = \overline{\chi}(A)
G(\chi,p^j). \end{equation}  Indeed, for $p\nmid A$  this is plain
from $y\mapsto A^{-1}y$. If $p\mid A$ and $j=1$ the sum equals
$\sum_{y=1}^{p}\chi(y)=0$ and for $j\geq 2$ writing
$y=a^{u+\phi(p^{j-1})v}$, $a$ a primitive root mod $p^m$, $\chi
(a)=e_{\phi(p^j)}(c)$,  $u=1,...,\phi(p^{j-1}),$ $v=1,..,p$, \be
\label{gaussprop} \sum_{y=1}^{p^j} \chi(y)
e_{p^j}(Ay)=\sum_{u=1}^{\phi (p^{j-1})} \chi(a^u)e_{p^j}(Aa^u)
\sum_{v=1}^p e_{p}(cv)=0. \ee Hence if $\chi_2$ is a primitive
character mod $p^m$ we have
$$G(\overline{\chi_2},p^m) \chi_2
(Ax^k+B)=\sum_{y=1}^{p^m}\overline{\chi_2}(y)e_{p^m}((Ax^{k}+B)y)$$
and, since $\chi_1=\chi_3^k$ and $D=(k,\phi (p^m))$,
\begin{align*} G(\overline{\chi_2},p^m)\sum_{x=1}^{p^m} \chi_1(x)
\chi_2(Ax^k+B) & =\sum_{x=1}^{p^m} \chi_3(x^k) \sum_{y=1}^{p^m}\overline{\chi_2}(y)e_{p^m}((Ax^{k}+B)y) \\
 & =\sum_{x=1}^{p^m} \chi_3(x^D) \sum_{y=1}^{p^m}\overline{\chi_2}(y)e_{p^m}((Ax^{D}+B)y) \\
  & = \sum_{\chi_4^D=\chi_0} \sum_{u=1}^{p^m} \chi_3(u) \chi_4 (u)
  \sum_{y=1}^{p^m}\overline{\chi_2}(y)e_{p^m}((Au+B)y) \\
   & =  \sum_{\chi_4^D=\chi_0} \sum_{y=1}^{p^m}\overline{\chi_2}(y)e_{p^m}(By)\sum_{u=1}^{p^m}
\chi_3\chi_4 (u)e_{p^m}(Auy) \\
  & =  \sum_{\chi_4^D=\chi_0} \sum_{y=1}^{p^m}\overline{\chi_2\chi_3\chi_4}(y)e_{p^m}(By)
 \sum_{u=1}^{p^m}
\chi_3\chi_4 (u)e_{p^m}(Au).
\end{align*}
Since $p\nmid B$ we have
$$  \sum_{y=1}^{p^m}\overline{\chi_2\chi_3\chi_4}(y)e_{p^m}(By) =   \chi_2\chi_3\chi_4 (B) G(\overline{\chi_2\chi_3\chi_4},p^m). $$
If $\chi_3\chi_4$ is a mod $p^{m-n}$ character then
$$\sum_{u=1}^{p^m}
\chi_3\chi_4 (u)e_{p^m}(Au)= p^n\sum_{u=1}^{p^{m-n}}\chi_3\chi_4
(u)e_{p^{m-n}}(A'u) =p^n \overline{\chi_3\chi_4}(A')
G(\chi_3\chi_4,p^{m-n}). $$ If $\chi_3\chi_4 $ is a primitive
character mod $p^j$ for some $m-n <j \leq m$ then by
\eqref{gaussprop}
$$\sum_{u=1}^{p^m}
\chi_3\chi_4 (u)e_{p^m}(Au)= p^{m-j}\sum_{u=1}^{p^{j}}\chi_3\chi_4
(u)e_{p^{j}}(p^{j-(m-n)}A'u) =0, $$ and the result follows.
\end{proof}

Notice that if $m\geq n+2$ then by \eqref{gaussabs} the set $X$ can
be further restricted to those $\chi_4$ with $\chi_3\chi_4 $
primitive mod $p^{m-n}$. Hence if $ p^t || \,k$, with $m\geq n+t+2 $
and we write $\chi_3(a)=e_{\phi (p^m)}(c_3)$, $\chi_4(a)=e_{\phi
(p^m)}(c_4)$ we have $p^{m-1-t} \mid  \,c_4$, $p^n ||\, (c_3+c_4)$,
giving $p^n || \, c_3$. From $\chi_3^{k}=\chi_1=\chi^l$ this yields
$p^{n+t}||\,c_3 k=c_1=cl$ and $p^{n+t}||\,l$. If $n>0$ we deduce
that $p^t||\; l+wk$. Moreover when $n=0$ reversing the roles of $A$
and $B$ gives $p^t ||\; l+wk$. Hence when $m\geq n+t+2$ we have
$S(\chi_1,\chi_2,Ax^k+B,p^m)=0$ unless \eqref{conditions1} holds.
For $m=n+t+1$ we similarly still have $p^{n+t}\mid l$.

\section{Evaluation of the Sums}
\begin{theorem} \label{main} Suppose that $p$ is an odd prime and  $\chi_1$, $\chi_2$ are mod $p^m$ characters with $\chi_2$ primitive.

If $\chi_1$ satisfies \eqref{conditions}, and \eqref{chareq0} has a
solution $x_0$ satisfying \eqref{conditions2}, then
$$
 \sum_{x=1}^{p^m}\chi_1(x)\chi_2(Ax^k+B) = d\chi (g(x_0))\begin{cases} p^{m-1}, & \text{if $t+n+1 < m \leq 2t+n+2$,} \\
 p^{\frac{m+n}{2}+t}, &  \text{if $m > 2t+n+2$, $m-n$ even,}\\
 p^{\frac{m+n}{2}+t} \varepsilon_1, &  \text{if $m > 2t+n+2$, $m-n$ odd,} \end{cases}
$$
where $n$, $d$, $t$ and  $g$ are as defined in \eqref{defn},
\eqref{defdt} and \eqref{defg}, with
$$\varepsilon_1=\left( \frac{\alpha}{p} \right) e_p\left(-2^{-2}\beta^2 \alpha^{-1}\right) \, \varepsilon, \hspace{3ex} \varepsilon = \begin{cases} 1 & p \equiv 1 \text{ mod } 4, \\
i & p \equiv 3 \text{ mod } 4, \end{cases}
$$
where  $\alpha$ and $\beta$ are integers defined in
\eqref{defalphabeta}  below and $\left(\frac{\alpha}{p}\right)$ is
the Legendre symbol.

If $\chi_1$ does not satisfy \eqref{conditions}, or
\eqref{chareq0} has no solution satisfying \eqref{conditions2}, then
the sum is zero.
\end{theorem}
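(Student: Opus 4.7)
The plan is to adapt the Cochrane--Zheng lifting method to the present multiplicative setting, in the spirit of the approach used in \cite{PP} for the sums \eqref{oldsums}. The strategy is to parametrize invertible residue classes multiplicatively around the critical point $x_{0}$ and then exploit the $p$-adic logarithmic expansion of $\chi_{1}$ and $\chi_{2}$ on the multiplicative neighbourhood $1+p^{s}\mathbb{Z}/p^{m}\mathbb{Z}$.

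Concretely, I would choose $s$ near $\lceil(m-n-t)/2\rceil+t$ and write each $x$ coprime to $p$ uniquely as $x=y(1+p^{s}z)$, with $y$ ranging over a fixed set of representatives of $(\mathbb{Z}/p^{s}\mathbb{Z})^{*}$ and $z$ over $\mathbb{Z}/p^{m-s}\mathbb{Z}$. On each coset, factoring gives
$\chi_{1}(x)\chi_{2}(Ax^{k}+B)=\chi_{1}(y)\chi_{2}(Ay^{k}+B)\,\chi_{1}(1+p^{s}z)\,\chi_{2}(1+u(y,z))$,
where $u(y,z)=Ay^{k}((1+p^{s}z)^{k}-1)/(Ay^{k}+B)$. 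Each of the last two factors then expands as $e_{p^{m}}$ of a truncated $p$-adic logarithm applied to $p^{s}z$ and to $u(y,z)$; collecting, one gets a linear-in-$z$ coefficient $\beta(y)p^{s}$ and a quadratic-in-$z$ coefficient $\alpha(y)p^{2s}$ modulo $p^{m}$, both expressible in terms of the logarithmic derivative of $g$ at $y$.

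Next, the inner $z$-sum takes the shape $\sum_{z}e_{p^{m}}(\alpha(y)p^{2s}z^{2}+\beta(y)p^{s}z)$. Vanishing of the linear coefficient modulo $p^{m-s}$ is equivalent to $g'(y)\equiv 0\pmod{p^{\min\{m-1,[(m+n)/2]+t\}}}$ after clearing the common factor $p^{n+t}$ forced by \eqref{conditions1}, so the surviving $y$ are exactly the Hensel lifts of $x_{0}$ to the appropriate depth. Under \eqref{conditions} and \eqref{conditions2} there are precisely $d=(k,p-1)$ such lifts, all sharing the common value $\chi(g(x_{0}))$. When $m>2t+n+2$ and $m-n$ is even the choice $2s=m-n$ makes the quadratic term vanish modulo $p^{m}$, so the $z$-sum is a Dirac delta giving $dp^{(m+n)/2+t}\chi(g(x_{0}))$. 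When $m-n$ is odd there is one surviving factor of $p$ in the quadratic coefficient, and the $z$-sum reduces to the classical quadratic Gauss sum $\sum_{z=0}^{p-1}e_{p}(\alpha z^{2}+\beta z)=\left(\frac{\alpha}{p}\right)\varepsilon\sqrt{p}\,e_{p}(-4^{-1}\beta^{2}\alpha^{-1})$, yielding the factor $\varepsilon_{1}$. In the intermediate range $t+n+1<m\le 2t+n+2$ the tighter constraint $p^{m-1}$ in \eqref{chareq0} forces a different choice of $s$ and the $z$-sum collapses to a delta of size $p^{m-1-s}$, giving $dp^{m-1}\chi(g(x_{0}))$.

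The main obstacle is the explicit identification of the integers $\alpha(x_{0})$ and $\beta(x_{0})$ and the verification that $p\nmid\alpha(x_{0})$, so that the classical quadratic Gauss formula is available; this requires careful bookkeeping of the $p$-adic valuations entering through $l$, $l+wk$, $A$ and $k$ in the simultaneous expansion of $\chi_{1}(1+p^{s}z)$ and $\chi_{2}(1+u(y,z))$. A secondary technical point, absent in the $t=0$ case handled by \cite{esopp}, is that when $t>0$ the critical point is not simple in their sense, and one must absorb the extra factor $p^{t}$ into the shift $s$ and then re-derive non-degeneracy of the quadratic term modulo $p$, which ultimately follows from $p\nmid w$ (i.e.\ primitivity of $\chi_{2}$ mod $p^{m}$).
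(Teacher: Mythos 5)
Your overall strategy --- localize multiplicatively around the critical point, let an inner sum detect the characteristic equation, and evaluate a quadratic Gauss sum when $m-n$ is odd --- matches the paper's, which however parametrizes $(\mathbb{Z}/p^m\mathbb{Z})^*$ by a primitive root, $x=a^{u\phi(p^L)/d+v}$, rather than by cosets $x=y(1+p^sz)$. That difference is not cosmetic, and it is the source of the first gap: condition \eqref{conditions} is a divisibility statement about $d=(k,p-1)$, and the factor $\chi_1(1+p^sz)$ in your inner $z$-sum constrains only the $p$-power part of the exponent $c_1$ of $\chi_1$; it can never force $d\mid c_1/p^{n+t}$. You must either build the factor $d$ into the subgroup you average over (as the paper does by letting $u$ run over $dp^{m-L}$ values, so the inner sum is a geometric series modulo $dp^{m-L}$ whose vanishing yields \eqref{conditions}) or appeal separately to Lemma \ref{ChiPower}; as written your argument does not produce the vanishing of the sum when \eqref{conditions} fails. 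Relatedly, your claim of ``precisely $d$ lifts, all sharing the common value $\chi(g(x_0))$'' is unproved and, for $t>0$, misleading: the critical fiber modulo the relevant modulus has $dp^t$-fold multiplicity, and since distinct lifts agree only modulo a power of $p$ strictly smaller than the conductor $p^m$ of $\chi$, the constancy of $\chi(g(\cdot))$ along the fiber is precisely what must be established rather than assumed.

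The second, and decisive, gap is one you flag yourself: the identification of $\alpha$ and $\beta$ and the verification that $p\nmid\alpha$. This is the actual content of the theorem in the range $m>2t+n+2$ with $m-n$ odd, and it is where the obstruction for $t>0$ (the non-simple critical point that puts the problem outside the scope of \cite{esopp}) gets resolved. The paper handles it by writing $g(x)^c=H(x^{dp^t})$, proving $p^n$ divides $H^{(j)}(x_0^{dp^t})/j!$ for all $j\geq 1$ so that the Taylor expansion terminates usefully modulo $p^m$, and computing $H''$ explicitly to obtain $\alpha=2^{-1}c_2h(x_0)^{-2}rA'B\left(k/dp^t\right)^2x_0^k$ as in \eqref{defalphabeta}, which is visibly a unit since $p\nmid c_2rA'Bx_0h(x_0)$ and $p\nmid k/p^t$. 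Nothing in your sketch substitutes for this computation, so the formula for $\varepsilon_1$ --- and hence the theorem in the odd case --- is not established. The even case and the range $t+n+1<m\leq 2t+n+2$ in your outline are plausible and close to the paper's argument, but they too rest on the unresolved multiplicity analysis above.
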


For the mod $p^m$ Jacobi sums, $\chi_1=\chi^l$, $\chi_2=\chi^w$,
$\chi$ primitive mod $p^m$ with $p\nmid lw(l+w)$, we have
$x_0=l(l+w)^{-1}$ and
$$ \sum_{x=1}^{p^m}\chi_1(x)\chi_2(1-x) =
\frac{\chi_1(l)\chi_2(w)}{\chi_1\chi_2 (l+w)} p^{\frac{m}{2}} \begin{cases} 1, & \text{ if $m$ is even,}\\
 \left(\frac{-2rc}{p}\right) \left(\frac{lw(l+w)}{p}\right)\varepsilon, & \text{ if $m\geq 3$ is odd,} \end{cases}$$
 with $r$ and $c$ as in \eqref{defr} and \eqref{defc} below.

\begin{proof} Let $a$ be a primitive root mod $p$ such that
\be \label{defr} a^{\phi(p)}=1+rp,\;\;\; p\nmid r. \ee
Thus $a$ is a
primitive root for all powers of $p$. We define the integers $r_l$,
$p\nmid r_l$, by
$$
a^{\phi(p^l)}=1+r_l p^l,
$$
so that $r=r_1$. Since $(1+r_{s+1}p^{s+1})=(1+r_sp^s)^p$, for any
$s\geq 1$ we have
 \be \label{rcong}
r_{s+1}\equiv r_s \text{ mod } p^s. \ee

We define the integers $c$, $c_1=cl$, $c_2=cw$,  by \be \label{defc}
\chi(a)=e_{\phi(p^m)}(c),\;\;\;
\chi_1(a)=e_{\phi(p^m)}(c_1),\;\;\;\chi_2(a)=e_{\phi(p^m)}(c_2). \ee
Since $\chi_2$ is assumed primitive we have $p\nmid c_2$.

We write
$$ \gamma = u\frac{\phi (p^L)}{d} + v,\;\;\;\;\; L:= \begin{cases}  1, & \text{ if } m\leq n+2t+2, \\
  \left\lceil \frac{m-n}{2}\right\rceil -t,  & \text{ if $m>n+2t+2$, }\end{cases} $$
and observe that if $u=1,...,dp^{m-L}$ and $v$ runs through an
interval $I$ of length $\phi(p^L)/d$ then $\gamma$ runs through a
complete set of residues mod $\phi (p^m)$. Hence setting
$h(x)=Ax^{k}+B$ and writing $x=a^{\gamma}$ we have
\begin{align*}
\sum_{x=1}^{p^m}\chi_1(x)\chi_2(h(x)) &
 =\sum_{v\in I} \chi_1(a^{v}) \sum_{u=1}^{dp^{m-1}}\chi_1 (a^{u\frac{\phi (p^L)}{d}})\chi_2\left(h\left(a^{u\frac{\phi (p^L)}{d}+v}\right)\right). \\
\end{align*}
Since $2(L+t)+n\geq m$ we can write
\begin{align*}
h\left(a^{u\frac{\phi (p^L)}{d}+v}\right) & = A \left( a^{\phi (p^{L+t})}\right)^{u\left(\frac{k}{dp^t}\right)}a^{vk}  + B
= A \left( 1+ r_{L+t}p^{L+t}\right)^{u\left(\frac{k}{dp^t}\right)}a^{vk} +B \\
 & \equiv h(a^v)+ A'u\left(\frac{k}{dp^t}\right)a^{vk} r_{L+t}p^{L+t+n} \text{ mod } p^m. \\
\end{align*}
This is zero mod $p$ if $p\mid h(a^v)$ and consequently any such $v$
give no contribution to the sum. If $p\nmid h(a^v)$ then, since
$r_{L+t}\equiv r_{L+t+n}$ mod $p^{L+t}$,
\begin{align*}
h\left(a^{u\frac{\phi (p^L)}{d}+v}\right)  & \equiv h(a^v) \left( 1
+A'u\left(\frac{k}{dp^t}\right)h(a^v)^{-1}a^{vk} r_{L+t+n}p^{L+t+n}
\right) \text{ mod } p^m \\
 & \equiv h(a^v)
a^{A'u\left(\frac{k}{dp^t}\right)h(a^v)^{-1}a^{vk}\phi(p^{L+t+n})}
\text{ mod } p^m.
\end{align*}
Thus,
\begin{align*}
\sum_{x=1}^{p^m} \chi_1(x)\chi_2(h(x)) & = \sum_{\substack{v\in I \\ p\nmid h(a^v)}} \chi_1(a^{v})
\chi_2(h(a^v)) \sum_{u=1}^{dp^{m-L}}\chi_1\left(a^{u\frac{\phi (p^L)}{d}}\right) \chi_2\left(
a^{u \frac{\phi (p^L)}{d} Ak a^{vk} h(a^v)^{-1}}\right), \\
\end{align*}
where the inner sum $\displaystyle
\sum_{u=1}^{dp^{m-L}}e_{dp^{m-L}}\left(u \left(c_1 +c_2 A
h(a^v)^{-1}ka^{vk}\right)\right)$ is $dp^{m-L}$ if \be \label{cong1}
c_1+c_2h(a^v)^{-1}A'a^{vk}\left(\frac{k}{dp^t}\right)dp^{t+n} \equiv
0 \text{ mod } dp^{m-L} \ee and zero otherwise. Thus our sum will be
zero unless \eqref{cong1} has a solution with $p\nmid h(a^v)$. For
$m\geq n+t+1$ we have $m-L\geq t+n$ and a solution to \eqref{cong1}
necessitates $dp^{t+n}\mid c_1$ (giving us condition
\eqref{conditions}) with $p^{t+n} \mid\mid l$ for $m> n+t+1$. Hence
for $m
>n+t+1$
we can simplify the congruence to
\begin{equation}
h(a^v)\left(\frac{c_1}{dp^{t+n}}\right)+c_2A'a^{vk}\left(\frac{k}{dp^t}\right) \equiv 0 \text{ mod } p^{m-L-t-n}
\end{equation}
and for a solution we must have $p^t \mid\mid c_1+kc_2$.
Equivalently, \be \label{chareqn1}\frac{cg'(a^v)}{dp^{t+n}} \equiv 0
\text{ mod } p^{m-t-n-L}, \ee and the characteristic equation
\eqref{chareq0} must have a solution satisfying \eqref{conditions2}.
Suppose that \eqref{chareq0} has a solution $x_0=a^{v_0}$ with
$p\nmid h(x_0)$ and that $m>n+t+1$. Rewriting the congruence
\eqref{chareqn1} in terms of the primitive root, $a$, gives
$$
a^{vk} \equiv a^b \text{ mod } p^{m-t-n-L}
$$
for some integer $b$.  Thus two solutions to \eqref{chareqn1},
$a^{v_1}$ and $a^{v_2}$ must satisfy
$$v_1k\equiv v_2 k \text{ mod }
\phi(p^{m-t-n-L}). $$ That is $v_1 \equiv v_2$ mod $\frac{(p-1)}{d}$
if $m\leq n+2t+2$ and if $m>n+2t+2$
$$ v_1\equiv v_2 \text{ mod }
\frac{ \phi (p^{m-n-2t-L})}{d} $$ where $m-n-2t-L=L$ if $m-n$ is
even and $L-1$ if $m-n$ is odd. Thus if $n+t+1 < m\leq n+2t+2$ or
$m>n+2t+2$ and $m-n$ is even our interval $I$ contains exactly one
solution $v$. Choosing $I$ to contain $v_0$ we get that
$$
\sum_{x=1}^{p^m} \chi_1(x)\chi_2(h(x)) = dp^{m-L}\chi_1(x_0)\chi_2(h(x_0)).
$$

Suppose that  $m>n+2t+2$ with $m-n$ odd and set
$s:=\frac{m-n-1}{2}$. In this case $I$ will have $p$ solutions and
we pick our interval $I$ to contain the $p$ solutions
$v_0+yp^{s-t-1}\left(\frac{p-1}{d}\right)$ where $y=0,...,p-1$.
Since $dp^t\mid c_1$ and $dp^t\mid k$ we can write, with $g$ defined
as in \eqref{defg},
$$ g_1(x):=g(x)^c=x^{c_1}(Ax^k+B)^{c_2}=:H\left(x^{dp^t}\right). $$
Thus, setting $\chi=\chi_4^c$, where $\chi_4$ is the mod $p^m$
character with $\chi_4(a)=e_{\phi (p^m)}(1)$,
\begin{align*} \sum_{x=1}^{p^m} \chi_1(x)\chi_2(h(x))
 & =dp^{\frac{m+n-1}{2}+t} \sum_{y=0}^{p-1}
\chi\left(g\left(a^{v_0+yp^{s-t-1}\left(\frac{p-1}{d}\right)}\right)\right)\\
 & = dp^{\frac{m+n-1}{2}+t}  \sum_{y=0}^{p-1}
\chi_4\left( H\left( x_0^{dp^t} a^{y\phi \left(p^{s}\right)}\right) \right),
\end{align*}
where \be \label{cong} x_0^{dp^t}a^{y\phi \left(p^{s}\right)}=
x_0^{dp^t}\left(1+r_s p^{s} \right)^y = x_0^{dp^t} + yr_s
x_0^{dp^t}p^{s}  \text{ mod } p^{m-n-1}. \ee
Since
$$ p^{-n} H'(x^{dp^t}) =
\left(\frac{xg_1'(x)}{dp^{t+n}}\right) x^{-dp^t} \in \mathbb Z [x],
$$ we have $p^n \mid{\frac{ H^{(k)}\left(x_0^{dp^t}\right)}{k!}}$, for all $k\geq
1.$  As  $xg_1'(x)=(c_1+kc_2)g_1(x)- c_2kBg_1(x)/h(x)$,
$$p^{-n} H''(x^{dp^t})x^{2dp^{t}}=\left(\frac{c_1}{dp^{t}}+c_2\frac{k}{dp^{t}} -c_2\frac{k}{dp^{t}} \frac{B}{h(x)} -1\right)
\left(\frac{xg_1'(x)}{dp^{t+n}}\right)
 +
c_2\left(\frac{k}{dp^t}\right)^2A'Bx^{k} \frac{g_1(x)}{h(x)^2}. $$
Plainly a solution $x_0$ to  \eqref{chareq0} satisfying
\eqref{conditions2} also has $g_1'(x_0)\equiv 0$ mod
$p^{\frac{m+n-1}{2}+t}$ and  \be \label{deflambda}
\frac{x_0g_1'(x_0)}{dp^{t+n}} =\lambda
p^{\frac{m-n-1}{2}},\hspace{3ex} H'(x_0^{dp^t}) =x_0^{-dp^t} \lambda
p^{\frac{m+n-1}{2}}, \ee  for some integer $\lambda$, and
$$ p^{-n}H''(x_0^{dp^t}) \equiv  c_2 \left( \frac{k}{dp^{t}}\right)^2 A'Bx_0^{k-2dp^t}\frac{g_1(x_0)}{h(x_0)^2} \text{ mod
} p.
$$ Hence by the Taylor expansion, using \eqref{cong} and that  $r_{s}\equiv r_{m-1}\equiv r$ mod $p$,
\begin{align*} H\left( x_0^{dp^t} a^{y\phi
\left(p^{s}\right)}\right) & \equiv H(x_0^{dp^t}) +
H'(x_0^{dp^t}) yr_s x_0^{dp^t}p^{\frac{m-n-1}{2}} +
2^{-1}H''(x_0^{dp^t}) y^2r_s^2
x_0^{2dp^t}p^{m-n-1}  \text{ mod } p^m \\
 & \equiv g_1(x_0) \left( 1 + \left(\beta  y + \alpha y^2\right)r_{m-1}p^{m-1}\right) \text{ mod } p^m \\
 & \equiv g_1(x_0) a^{\left(\beta  y + \alpha y^2\right)\phi (p^{m-1})} \text{ mod } p^m, \\
\end{align*}
with \be \label{defalphabeta} \beta := g_1(x_0)^{-1}\lambda
,\hspace{3ex} \alpha :=  2^{-1} c_2h(x_0)^{-2} r A'B
\left(\frac{k}{dp^t}\right)^2 x_0^k, \ee and
$$\chi_4\left( H \left( x_0^{dp^t} a^{y\phi
\left(p^{s}\right)}\right)
\right)= \chi (g(x_0)) e_p(\alpha y^2 +\beta y). $$ Since
plainly $p\nmid \alpha$, completing the square then gives the result
claimed
\begin{align*} \sum_{x=1}^{p^m} \chi_1(x)\chi_2(h(x)) & =dp^{\frac{m+n-1}{2}+t}\chi (g(x_0))e_p(-4^{-1} \alpha^{-1}\beta^2)\sum_{y=0}^p  e_p(\alpha
y^2)\\
 &   = dp^{\frac{m+n-1}{2}+t}\chi (g(x_0))
 e_p(-4^{-1} \alpha^{-1}\beta^2)\left(\frac{\alpha}{p}\right)\varepsilon p^{\frac{1}{2}} \end{align*}
where $\varepsilon$ is $1$ or $i$ as $p$ is $1$ or  $3$ mod $4$.
Notice that if $x_0$ is a solution to the stronger congruence
$g'(x_0)\equiv 0$ mod $p^{\left[ \frac{m+n}{2}\right]+t+1}$ then
$\beta=0$ and the $e_p(-4^{-1} \alpha^{-1}\beta^2)$ can be omitted.
\end{proof}


\begin{thebibliography}{99}




\bibitem{BerndtBk} B.C. Berndt, R.J. Evans \& K.S. Williams, {\em Gauss and Jacobi Sums}, Canadian Math. Soc. series of monographs and advanced texts, vol. 21,
Wiley, New York 1998.

\bibitem{castro} F. Castro \& C. Moreno, {\em Mixed exponential sums
over finite fields}, Proc. Amer. Math. Soc. 128 (2000), 2529-2537.

\bibitem{esopp} T. Cochrane, {\em Exponential sums modulo prime powers}, Acta Arith. 101 (2002), no. 2, 131-149.


\bibitem{esrf} T. Cochrane, {\em Exponential sums with rational function entries}, Acta Arith. 95 (2000), no. 1, 67-95



\bibitem{cpstep}
T. Cochrane and C. Pinner, \emph{Using Stepanov's method for
exponential sums involving rational functions}, J. Number Theory 116
(2006), no. 2, 270-292.


\bibitem{cz}
T. Cochrane, Zhiyong Zheng, \emph{Pure and mixed exponential sums},
Acta Arith. 91 (1999), no. 3, 249-278.

\bibitem{cz2}
T. Cochrane, Zhiyong Zheng, \emph{A survey on pure and mixed
exponential sums modulo prime powers},  Number theory for the
millennium, I (Urbana, IL, 2000), 273-300, A K  Peters, Natick,MA,
2002.



\bibitem{HanDi} Han Di, {\em A hybrid mean value involving two-term
exponential sums and polynomial character sums}, to appear Czech.
Math. J.


\bibitem{WilliamsJac} P. Leonard \& K. Williams, {\em Evaluation of certain Jacobsthal sums}, Boll. Unione Mat. Ital. 15 (1978),
717-723.

\bibitem{LidlNiedBk} R. Lidl \& H. Niederreiter, {\em Finite
Fields}, Encyclopedia of Mathematics and its applications 20, 2nd
edition, Cambridge University Press, 1997.

\bibitem{mordell} L. J. Mordell, {\em On Sali\'{e}'s sum}, Glasgow Math. J. 14 (1973), 25-26.

\bibitem{PP} V. Pigno \& C. Pinner, {\em Twisted monomial Gauss sums
modulo prime powers}, submitted to Acta Arith.


\bibitem{Salie} Hans Sali\'{e}, {\em Uber die Kloostermanschen summen $S(u,v; q)$}, Math.
Zeit. 34 (1931-32),  91-109.

\bibitem{JacobiSums} J.\ Wang, {\em On the Jacobi sums mod $P^n$}, J.
Number Theory 39 (1991), 50-64.


\bibitem{weil} A. Weil, \emph{On some exponential sums}, Proc. Nat.
Acad. Sci. U.S.A. 34 (1948), 203-210.

\bibitem{salie1} K. Williams, {\em On Sali\'{e}'s sum},  J. Number Theory 3 (1971), 316-317.


\bibitem{salie2} K. Williams, {\em Note on Sali\'{e}'s sum}, Proc. Amer. Math. Soc., Vol 30, no. 2 (1971),
393-394.

\bibitem{Wenpeng1} W. Zhang \& W. Yao, {\em A note on the Dirichlet characters of
polynomials},  Acta Arith. 115 (2004), no. 3, 225-229.

\bibitem{Wenpeng3} W. Zhang \& Y. Yi, {\em On Dirichlet Characters of
Polynomials},  Bull. London Math. Soc. 34 (2002), no. 4,  469-473.

\bibitem{Wenpeng2} W. Zhang \& Z. Xu, {\em On the Dirichlet characters of polynomials in several
variables},  Acta Arith. 121 (2006), no. 2, 117-124.
\end{thebibliography}
\end{document}